\theoremstyle{definition}
\begin{document}

\title{Generalization of Abhyankar's Lemma to henselian valued fields}
\author{Arpan Dutta}
\def\NZQ{\mathbb}               
\def\NN{{\NZQ N}}
\def\QQ{{\NZQ Q}}
\def\ZZ{{\NZQ Z}}
\def\RR{{\NZQ R}}
\def\CC{{\NZQ C}}
\def\AA{{\NZQ A}}
\def\BB{{\NZQ B}}
\def\PP{{\NZQ P}}
\def\FF{{\NZQ F}}
\def\GG{{\NZQ G}}
\def\HH{{\NZQ H}}
\def\UU{{\NZQ U}}
\def\P{\mathcal P}

%
%
\let\union=\cup
\let\sect=\cap
\let\dirsum=\oplus
\let\tensor=\otimes
\let\iso=\cong
\let\Union=\bigcup
\let\Sect=\bigcap
\let\Dirsum=\bigoplus
\let\Tensor=\bigotimes

\newtheorem{Theorem}{Theorem}[section]
\newtheorem{Lemma}[Theorem]{Lemma}
\newtheorem{Corollary}[Theorem]{Corollary}
\newtheorem{Proposition}[Theorem]{Proposition}
\newtheorem{Remark}[Theorem]{Remark}
\newtheorem{Remarks}[Theorem]{Remarks}
\newtheorem{Example}[Theorem]{Example}
\newtheorem{Examples}[Theorem]{Examples}
\newtheorem{Definition}[Theorem]{Definition}
\newtheorem{Problem}[Theorem]{}
\newtheorem{Conjecture}[Theorem]{Conjecture}
\newtheorem{Question}[Theorem]{Question}
\let\epsilon\varepsilon
\let\phi=\varphi
\let\kappa=\varkappa

\def \a {\alpha}
\def \s {\sigma}
\def \d {\delta}
\def \g {\gamma}

%
%
\textwidth=15cm \textheight=22cm \topmargin=0.5cm
\oddsidemargin=0.5cm \evensidemargin=0.5cm \pagestyle{plain}

\address{Department of Mathematics, IISER Mohali,
	Knowledge City, Sector 81, Manauli PO,
SAS Nagar, Punjab, India, 140306.}
\email{arpan.cmi@gmail.com}

\keywords{Valuation, Ramification, Henselian valued field}
\subjclass[2010]{12J20, 13A18, 12J25}

\begin{abstract}
	Abhyankar showed that for a finite tame extension $L_1/K$ and a finite extension $L_2/K$ of $\mathfrak{P}$-adic fields, the condition $[\nu L_1 : \nu K]$ divides $[\nu L_2 : \nu K]$ is sufficient to eliminate ramification, that is, $L_1 \cdot L_2 / L_2$ is unramified. In this paper, we show that the above condition is not sufficient in the case of an arbitrary henselian valued field. We construct a counterexample illustrating that fact. We also give a necessary and sufficient condition for the elimination of tame ramification of a henselian field after a finite extension of the base field.
\end{abstract}

\maketitle

\section*{Notations} For any two fields $K_1$ and $K_2$, their compositum will be denoted by $K_1 \cdot K_2$. The algebraic closure of a field $K$ will be denoted by $\overline{K}$ and the characteristic will be denoted by char $K$. If char $K = p > 0$, its perfect hull will be denoted by $K^{\frac{1}{p^\infty}}$. The degree of a finite extension $L/K$ of fields will be denoted by $[L:K]$. The order of a finite quotient $G/H$ of groups will be denoted by $[G:H]$. 

\section{Introduction}

A $\mathfrak{P}$-adic field is the completion of an algebraic number field under a discrete valuation of rank 1. Abhyankar's lemma [cf. \ref{Narkiewicz}, Chapter 5, Corollary 4] gives a necessary and sufficient condition to eliminate tame ramification of a $\mathfrak{P}$-adic field after lifting through a finite extension. 
\begin{Lemma}\textbf{(Abhyankar)}\label{Abhyankar lemma}
	Let $K$ be a $\mathfrak{P}$-adic field, $L_1/K$ be a finite tame extension and $L_2/K$ be a finite extension. Let $e_1 = [\nu L_1 : \nu K]$ and $e_2 = [\nu L_2 : \nu K]$ denote the corresponding ramification indices. Let $L = L_1 \cdot L_2$. Then $L/L_2$ is unramified if $e_1$ divides $e_2$.
\end{Lemma}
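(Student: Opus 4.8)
The plan is to exploit the special structure of $\mathfrak{P}$-adic fields: $K$ is complete with value group $\nu K \cong \ZZ$ and finite (hence perfect) residue field, so the fundamental equality $[M:N]=e(M/N)f(M/N)$ holds for every finite subextension, residue extensions are automatically separable, and ``unramified'' is equivalent to $e=1$. I would first reduce to the totally ramified case. Writing $T_1$ for the maximal unramified subextension of $L_1/K$, so that $[T_1:K]=f_1$ and $L_1/T_1$ is totally tamely ramified of degree $e_1$, I set $L' := L_2\cdot T_1$. Since $T_1/K$ is unramified, $L'/L_2$ is unramified; and because $L_1\supseteq T_1$ we have $L=L_1L_2=L_1\cdot L'$. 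As unramified extensions compose, it then suffices to prove that $L/L'$ is unramified. Replacing $K$ by $T_1$ and $L_2$ by $L'$, I have reduced to the case where $L_1/K$ is totally tamely ramified of degree $e_1$, while keeping $e_1\mid e(L'/K)=e_2$.

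Next I would put the totally ramified extension into a clean radical form. Picking $\alpha\in L_1$ of minimal positive value $\tfrac1{e_1}\nu\pi$, where $\pi$ is a uniformizer of the base, its $e_1$-th power is $\pi$ times a unit; since the residue extension is trivial and $\gcd(e_1,p)=1$, Hensel's lemma lets me adjust $\alpha$ by a $1$-unit so that $\alpha^{e_1}=\pi'$ is an honest uniformizer of the base and $L_1=K(\alpha)$. This normalization, where completeness and tameness are genuinely used, is the step I expect to be the main technical obstacle.

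The decisive step then uses $\nu K\cong\ZZ$ in an essential way. Because $e_1\mid e_2$, the group $\nu L_2=\tfrac1{e_2}\nu K$ contains $\tfrac1{e_1}\nu\pi'$, so I may choose $\beta\in L_2$ with $\nu\beta=\nu\alpha$. Then $\gamma:=\alpha/\beta$ has value $0$ and satisfies $\gamma^{e_1}=\pi'/\beta^{e_1}=:c$, a unit of $L_2$, with $L=L_2(\alpha)=L_2(\gamma)$. Finally I would observe that adjoining an $e_1$-th root of a unit with $\gcd(e_1,p)=1$ produces an unramified extension: the reduction $X^{e_1}-\bar c$ is separable over the residue field, so by Hensel's lemma $e(L/L_2)=1$ with separable residue extension, i.e. $L/L_2$ is unramified. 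Unwinding the two reductions yields the claim in general. I would stress that the implication ``$e_1\mid e_2 \Rightarrow \nu L_1\subseteq\nu L_2$'', used to produce $\beta$, is precisely what requires $\nu K$ to have rank one, foreshadowing why the analogous statement must fail for an arbitrary henselian valued field.
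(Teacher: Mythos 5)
Your proof is correct. Note first that the paper does not actually prove this lemma: it cites it from Narkiewicz (Chapter 5, Corollary 4) and only supplies, at the start of Section 3, the easy converse direction via multiplicativity of ramification indices. So there is no internal proof to match against; what you have written is the classical argument for the sufficiency direction, and it holds up. The reduction to the totally ramified case via the maximal unramified subextension $T_1$ is sound (one should check, as you implicitly do, that $e_1 \mid e(L'/T_1)$, which follows since $e(T_1/K)=e(L'/L_2)=1$); the normalization $L_1 = K(\alpha)$ with $\alpha^{e_1}$ a uniformizer is the standard structure theorem for totally tamely ramified extensions of complete discretely valued fields with perfect residue field, and your use of Hensel's lemma there (lifting the residue of the unit into the base, then extracting an $e_1$-th root of the remaining $1$-unit using $\gcd(e_1,p)=1$) is exactly right; and the final step, that adjoining an $e_1$-th root of a unit is unramified because $X^{e_1}-\bar c$ is separable over the residue field, is correct. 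Your closing remark is also well taken and is precisely the point of the paper: the step producing $\beta$ uses that $e_1 \mid e_2$ forces $\nu L_1 \subseteq \nu L_2$ inside $\frac{1}{e_1 e_2}\ZZ$, which fails already for rank $2$ value groups, as the paper's Example \ref{Counterexample to abhyankar lemma} (with $\nu L_1 = \frac12\ZZ\dirsum\ZZ$ and $\nu L_2 = \ZZ\dirsum\frac12\ZZ$) shows. By contrast, the paper's general substitute (Lemma \ref{both tame, unramified} and Theorem \ref{theorem for unramified}) replaces your explicit radical generator by a counting criterion $e_2 d_\theta = e_1[L:L_1]$ built on Ostrowski's lemma and the ramification-theoretic fields $K^i$, $K^r$; your argument is more constructive but genuinely tied to the rank-one, complete, discretely valued setting.
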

 For a valued field of rank 1, the henselization lies in the completion, and in particular, the completion itself is henselian. So the question naturally arises whether the above condition holds for an arbitrary henselian valued field. In section 3, we construct the following counter-example (Example \ref{Counterexample to abhyankar lemma}) to show that this is not the case. 
 \begin{Example} 
 	Let $k$ be a field with char $k \neq 2$ and $k(X,Y)$ denote the function field in two variables. Define a map $\nu : k[X,Y] \longrightarrow \ZZ \dirsum \ZZ$ by 
 	\begin{equation*}
 	\nu (\sum_{a,b \in \NN} c_{a,b} X^a Y^b) = \text{min} \{(a,b) \mid c_{a,b} \neq 0 \}
 	\end{equation*}
 	where the ordering taken is lex order. We extend $\nu$ to a valuation on $k(X,Y)$ by defining $\nu (\frac{f}{g}) := \nu(f) - \nu(g)$ where $f,g \in k[X,Y]$. Set $K = (k(X,Y), \nu)^h$ to be the henselization of $k(X,Y)$ with respect to $\nu$. Let $L_1 = K(\sqrt{X})$ and $L_2 = K(\sqrt{Y})$. Let $L = L_1 \cdot L_2$. Then $(L/L_2, \nu)$ is ramified.  
 \end{Example}
 
 A local field is a complete discretely valued field of rank 1, with finite residue field. In [\ref{CH}] it is shown that the ramification index of the compositum of two finite extensions of local fields is equal to the least common multiple of the ramification indices corresponding to the finite extensions, provided at least one of the extensions is tame. Example \ref{Counterexample to abhyankar lemma} shows that this result also does not hold in the case of arbitrary henselian valued fields. 
\par $\mathfrak{P}$-adic fields are henselian and in particular, defectless. In section 4, we see that the defect does not play a role in the elimination of tame ramification, since it is enough to lift through a finite tame extension. We state two important results. For a valued field $(K, \nu)$, $K^r$ denotes the absolute ramification field [cf. Section 2]. 
\begin{Lemma}
	Let $(K, \nu)$ be a henselian field such that $K \nu$ is perfect. Let $(L_1/K, \nu)$ be a finite tame extension and $(L_2/K, \nu)$ be a finite extension. Let $L = L_1 \cdot L_2$, $L_2 ^{\prime} = L_2 \sect K^r$ and $L^{\prime} = L_1 \cdot L_2 ^{\prime}$. Then $(L/L_2, \nu)$ and $(L^{\prime}/ L_2 ^{\prime} , \nu)$ are finite tame extensions. Moreover $(L/L_2, \nu)$ is unramified if and only if $(L^{\prime} / L_2 ^{\prime}, \nu)$ is unramified. 
\end{Lemma}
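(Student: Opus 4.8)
The plan is to reduce both assertions to statements about value groups, exploiting two standard features of tame extensions over a henselian field with perfect residue field: tameness is preserved under base change and is transitive in towers, and a finite tame extension of $K$ is exactly a finite subextension of the absolute ramification field $K^r$. For the tameness claims, since $L_1/K$ is tame we have $L_1 \subseteq K^r$; hence $L_2' = L_2 \sect K^r$ and $L' = L_1 \cdot L_2'$ both lie in $K^r$, so $L_2'/K$ and $L'/K$ are tame, and $L'/L_2'$ is tame by transitivity in the tower $K \subseteq L_2' \subseteq L'$. For $L/L_2$, I would base change the tame extension $K^r/K$ along $L_2$: then $K^r \cdot L_2/L_2$ is tame, i.e. $K^r \cdot L_2 \subseteq L_2^r$, and since $L = L_1 \cdot L_2 \subseteq K^r \cdot L_2$ the extension $L/L_2$ is tame. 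All four extensions are finite, being composita of finite extensions.

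Because $K\nu$ is perfect, every residue extension occurring here is separable, so for a tame extension the fundamental equality reduces ``unramified'' to ``the value group does not grow'': $L/L_2$ is unramified iff $\nu L = \nu L_2$, and similarly for $L'/L_2'$. For the implication $(\Leftarrow)$ I would note $L = L' \cdot L_2$ (as $L_2' \subseteq L_2$) and invoke stability of unramified extensions under base change: if $L'/L_2'$ is unramified then so is $L' \cdot L_2/L_2 = L/L_2$. For $(\Rightarrow)$ I would argue with value groups. From $L_1 \subseteq K^r$ we get $\nu L_1 \subseteq \nu K^r$, the prime-to-$p$ divisible hull of $\nu K$, and once $L/L_2$ is unramified also $\nu L_1 \subseteq \nu L = \nu L_2$. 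The point is then to upgrade this to $\nu L_1 \subseteq \nu L_2'$; granting that, the multiplicativity $\nu(L_1 \cdot M) = \nu L_1 + \nu M$ valid for tame $L_1/K$ gives $\nu L' = \nu L_1 + \nu L_2' = \nu L_2'$, whence $L'/L_2'$ is unramified.

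The main obstacle is exactly the containment $\nu L_2 \sect \nu K^r \subseteq \nu L_2'$, that is, the fact that the maximal tame subextension $L_2' = L_2 \sect K^r$ already realizes the whole prime-to-$p$ part of $\nu L_2$. This is not formal: an element of $L_2$ whose value is prime-to-$p$ over $\nu K$ need not itself generate a tame extension of $K$. To establish it I would fix $\gamma \in \nu L_2 \sect \nu K^r$ with $n\gamma = \nu a$ for some $a \in K^\times$ and $n$ prime to $p$, and a witness $y \in L_2$ with $\nu y = \gamma$, so that $y^n = au$ with $u$ a unit of $L_2$. Passing to the maximal unramified subextension $F = L_2 \sect K^{ur}$, whose residue field is all of $L_2\nu$ (here perfectness is used a second time), I would choose a unit $u_0 \in F$ with $\overline{u_0} = \overline{u}$; then $u/u_0$ has residue $1$, and since $n$ is prime to $p$, Hensel's lemma yields $s \in L_2$ with $s^n = u/u_0$. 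Setting $w = y/s \in L_2$ gives $w^n = a u_0 \in F$, so $F(w)/F$ is tame and $w \in F^r = K^r$; thus $w \in L_2 \sect K^r = L_2'$ with $\nu w = \gamma$. Applying this to a generating set of $\nu L_1 \subseteq \nu L_2 \sect \nu K^r$ delivers $\nu L_1 \subseteq \nu L_2'$ and closes the argument.
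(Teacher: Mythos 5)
Your argument is correct, but it takes a genuinely different route from the paper's. The paper isolates the purely wild part: since $L_2\cap (L_2')^r=L_2\cap K^r=L_2'$, the extension $(L_2/L_2',\nu)$ is purely wild, and applying Lemma \ref{tame wild lemma} over the base $L_2'$ (tame part $L'/L_2'$, wild part $L_2/L_2'$) gives the single identity $[\nu L:\nu L_2']=[\nu L':\nu L_2']\,[\nu L_2:\nu L_2']$; cancelling against the tower formula yields $[\nu L':\nu L_2']=[\nu L:\nu L_2]$, which delivers both implications at once (and the tameness of $L/L_2$ as a byproduct of that same lemma). You instead split the equivalence: the easy direction via base change of unramified extensions ($M^i=M\cdot k^i$), and the hard direction by an element-level computation. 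Your Hensel's-lemma construction of $w\in L_2\cap K^r$ with $\nu w=\gamma$ is a correct and rather illuminating self-contained proof that $\nu L_2\cap\nu K^r\subseteq\nu L_2'$ — information the paper extracts instead from the fact that $\nu L_2/\nu L_2'$ is a $p$-group. Two points deserve more care in your write-up. First, the additivity $\nu(L_1\cdot L_2')=\nu L_1+\nu L_2'$ for composita of tame extensions is true but not formal: it requires passing to the base $K^i$ and using that $E\mapsto\nu E$ is an inclusion-preserving bijection between finite subextensions of $K^r/K^i$ and finite subgroups of $\nu K^r/\nu K$ (via the character-group description of $\mathrm{Gal}(K^r/K^i)$); this is of comparable depth to the multiplicativity the paper proves in Lemma \ref{tame wild lemma}, so it should be justified or cited rather than asserted. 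Second, the claim that $F(w)/F$ is tame when $w^n=au_0$ with $n$ prime to $p$ needs the standard detour through $F(\zeta_n)\subseteq F^i$. With those two steps filled in, your proof stands; what it buys is an explicit explanation of why $L_2'$ already realizes the whole prime-to-$p$ part of $\nu L_2$, whereas the paper's reduction to Lemma \ref{tame wild lemma} is shorter and yields the precise numerical relation between the two ramification indices.
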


\begin{Theorem}
	Let $(K, \nu)$ be a henselian field such that $K \nu$ is perfect. Let $(L_1/K, \nu)$ be a finite tame extension and $(L_2/K, \nu)$ be a finite extension. Set $L = L_1 \cdot L_2$. Set $L_2 ^{\prime} = L_2 \sect K^r$ and $L^{\prime} = L_1 \cdot L_2 ^{\prime}$. Set $e_1 = [\nu L_1 : \nu K]$  and $e_2 ^{\prime} = [\nu L_2 ^{\prime} : \nu K]$. For any $\theta \in \mathfrak{O}_{L^{\prime}}$ let $d_{\theta} = [L_1 \nu (\theta \nu) : L_1 \nu]$. Then, 
	\begin{equation*}
	(L/L_2, \nu) \text{ is unramified if and only if } e_2 ^{\prime} d_{\theta} = e_1 [L^{\prime} : L_1] \text{ for some } \theta \in \mathfrak{O}_{L^{\prime}}.
	\end{equation*} 
	In this case, $L^{\prime} \nu = L_1 \nu (\theta \nu)$ and $e_1$ divides $e_2 ^{\prime}$.
\end{Theorem}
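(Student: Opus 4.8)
The plan is to reduce the whole statement to the subextension $L'/L_2'$ by means of the preceding Lemma, and then to run an essentially numerical argument built on the fundamental equality $[\,\cdot:\cdot\,]=ef$ for tame extensions. First I would invoke the Lemma: since $(L'/L_2',\nu)$ is a finite tame extension and $(L/L_2,\nu)$ is unramified if and only if $(L'/L_2',\nu)$ is unramified, it suffices to characterize unramifiedness of $(L'/L_2',\nu)$. Because $L_1/K$ is tame and $L_2'=L_2\cap K^r\subseteq K^r$, the compositum $L'=L_1\cdot L_2'$ lies in $K^r$, so $L'/K$, $L_1/K$, $L_2'/K$ and $L'/L_2'$ are all tame and in particular defectless. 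For the tame extension $L'/L_2'$ this means it is unramified exactly when $[\nu L':\nu L_2']=1$, i.e. when $\nu L'=\nu L_2'$; thus the theorem reduces to detecting this equality of value groups.

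Next I would set up the degree bookkeeping. Write $e'=[\nu L':\nu K]$, $f'=[L'\nu:K\nu]$ and $f_1=[L_1\nu:K\nu]$. Tameness of $L'/K$ and $L_1/K$ gives $[L':K]=e'f'$ and $[L_1:K]=e_1f_1$, whence
\begin{equation*}
e_1[L':L_1]=e_1\cdot\frac{e'f'}{e_1f_1}=\frac{e'f'}{f_1},
\end{equation*}
so the condition in the theorem reads $e_2'd_\theta=e'f'/f_1$. Two facts are available for every $\theta$: since $\theta\nu\in L'\nu$ we have the a priori bound $d_\theta=[L_1\nu(\theta\nu):L_1\nu]\le[L'\nu:L_1\nu]=f'/f_1$; and from $\nu K\subseteq\nu L_2'\subseteq\nu L'$ we get $e_2'\mid e'$, in particular $e_2'\le e'$.

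With these in hand both directions follow quickly. For the ``if'' direction, suppose some $\theta\in\mathfrak{O}_{L'}$ satisfies $e_2'd_\theta=e'f'/f_1$; combining with the bound on $d_\theta$ gives $e'f'/f_1=e_2'd_\theta\le e_2'f'/f_1$, hence $e'\le e_2'$, and together with $e_2'\mid e'$ this forces $e'=e_2'$, i.e. $\nu L'=\nu L_2'$, which is exactly the unramifiedness. The same equality makes the bound on $d_\theta$ sharp, so $d_\theta=f'/f_1=[L'\nu:L_1\nu]$, that is $L'\nu=L_1\nu(\theta\nu)$, and $\nu L_1\subseteq\nu L'=\nu L_2'$ yields $e_1\mid e_2'$. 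For the ``only if'' direction, suppose $(L'/L_2',\nu)$ is unramified, so $e'=e_2'$. Since $K\nu$ is perfect, $L_1\nu$ is perfect, hence the finite extension $L'\nu/L_1\nu$ is separable and so simple by the primitive element theorem; lifting a primitive element to some $\theta\in\mathfrak{O}_{L'}$ gives $d_\theta=[L'\nu:L_1\nu]=f'/f_1$, and therefore $e_2'd_\theta=e_2'f'/f_1=e'f'/f_1=e_1[L':L_1]$.

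The two delicate points are the source of all the content. The first is justifying that every extension in sight is defectless, so that the multiplicativity $[\,\cdot:\cdot\,]=ef$ is exact; this is precisely where the containment $L'\subseteq K^r$ and the perfection of $K\nu$ enter, and it is what the preceding Lemma is designed to supply. The second, and the step I expect to be the genuine crux, is the ``pinching'' observation that the a priori inequality $d_\theta\le f'/f_1$ combined with $e_2'\mid e'$ squeezes the value-group index down to $e'=e_2'$: this is what converts the seemingly unrelated residue-degree equation $e_2'd_\theta=e_1[L':L_1]$ into the ramification statement $\nu L'=\nu L_2'$. Everything else is routine degree arithmetic together with a single appeal to the primitive element theorem.
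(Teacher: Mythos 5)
Your proposal is correct and follows essentially the same route as the paper: the paper proves the theorem by combining Lemma \ref{enough to consider tame for unramified} (the reduction to the tame subextension $L'/L_2'$) with Lemma \ref{both tame, unramified}, whose proof is exactly your degree bookkeeping --- the primitive-element argument for the forward direction and the pinching inequality $e_2'd_\theta\le e_2'[L'\nu:L_1\nu]$ combined with $e_2'\mid e'$ for the converse. The only cosmetic difference is that you justify separability of $L'\nu/L_1\nu$ via perfectness of $K\nu$, whereas the paper gets it directly from tameness of $(L'/L_1,\nu)$, so Lemma \ref{both tame, unramified} does not need the perfectness hypothesis.
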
 

Another important related problem is the problem of uniformization of valued field extensions. It has been shown by Knaf and Kuhlmann [\ref{Knaf Kuhlmann}, \ref{Kuh local uzn arbit char}] that any valuation of an algebraic function field admits local uniformization in a finite extension of the function field. It is also known that a finite unramified extension of henselian valued fields is uniformizable [cf. \ref{Kuh vln model}, Proposition 15.8]. In section 4, we construct an easy example of a family of valued field extensions which are uniformizable after lifting through a finite extension.

\section{Preliminaries}

A valuation $\nu$ on a field $K$ is a homomorphism $\nu : K^* \longrightarrow \Gamma$, where $\Gamma$ is an ordered abelian group, such that $\forall x,y \in K^*$,
\begin{equation*}
	\nu (xy) = \nu (x) + \nu(y), \, \nu(x+y) \geq \text{min }\{ \nu(x), \nu(y) \}. 
\end{equation*}  
We have the convention that $\nu(0) = \infty$ where $\infty$ has a higher value than any element in $\Gamma$. We can assume that $\nu$ is surjective onto $\Gamma$. $\Gamma$ is called the value group and will be denoted by $\nu K$. A valued field $(K, \nu)$ is a field $K$ with a valuation $\nu$ on it. The valuation ring $\mathfrak{O}_K = \{ x \in K \mid \nu(x) \geq 0 \}$ is a normal local domain with the unique maximal ideal $\mathfrak{m}_K = \{ x \in K \mid \nu(x) > 0  \}$. The quotient field of $\mathfrak{O}_K$ is $K$. We will denote the residue field $\frac{\mathfrak{O}_K}{\mathfrak{m}_K}$ as $K \nu$. For $x \in \mathfrak{O}_K$, its value will be denoted by $\nu (x)$ and its residue will be denoted by $x \nu$. For a polynomial $f(X) \in \mathfrak{O}_K [X]$, its image in $K \nu [X]$ under the natural surjection map will be denoted by $(f \nu)(X)$. 
\newline The \textbf{residue characteristic} of $(K, \nu)$ will be denoted by $p$ and is defined as 
\begin{equation*}
	 p = \left\{  
	\begin{aligned}
	& \text{char } K \nu \text{ if char }K \nu > 0 \\ & 1 \text{ if char }K \nu = 0
	\end{aligned}
	\right\}.
\end{equation*}
\newline Given a valued field $(K, \nu)$ and a field extension $L/K$, we always obtain an extension of $\nu$ to $L$ [\ref{ZS2}]. Conversely if $L/K$ is a field extension and $\nu$ is a valuation on $L$, its restriction to $K$ is a valuation on $K$. We will denote an extension of valued fields as $(L/K, \nu)$, which will mean $L/K$ is a field extension, $\nu$ is a valuation on $L$ and $K$ is equipped with the restricted valuation. In this case we have that $\nu K$ is a subgroup of $\nu L$ and $K \nu$ is a subfield of $L \nu$. An extension $(L/K, \nu)$ of valued fields is said to be \textbf{immediate} if $\nu K = \nu L$ and $K \nu = L \nu$. 
\newline If $L/K$ is a finite extension, there are finitely many extensions of $\nu$ from $K$ to $L$. Let they be denoted by $\{\nu_1, \cdots , \nu_g  \}$. Associated to these extensions, we have two classical indices. The \textbf{ramification index} is defined as $e_i = [\nu_i L : \nu K]$ and the \textbf{residual degree} is defined as $f_i = [L \nu_i : K \nu]$. We have $1 \leq e_i, f_i \leq [L : K]$. When we have an unique extension of $\nu$ to $L$, an important related result is \textbf{Ostrowski's Lemma}.
\begin{Lemma}\textbf{(Ostrowski)}\label{Ostrowski}
	Let $(K, \nu)$ be a valued field with residue characteristic $p$. Suppose that $L/K$ is a finite extension such that there is an unique extension of $\nu$ to $L$. Then, 
	\begin{equation*}
		[L : K] = d \cdot e \cdot f
	\end{equation*} 
	where $d$ is a power of $p$, $e = [\nu L : \nu K]$ and $f = [L \nu : K \nu]$. 
\end{Lemma}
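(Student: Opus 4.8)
The plan is to prove the defect formula in three stages: first establish the fundamental inequality and reduce to the henselian case, then reduce to finite normal extensions by multiplicativity of the indices in a tower, and finally compute the defect of a normal extension through its ramification subgroups, where the $p$-power nature of $d$ becomes visible.

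First I would record the \emph{fundamental inequality}. If $x_1, \dots, x_e \in L^*$ have values lying in pairwise distinct cosets of $\nu K$ in $\nu L$, and $y_1, \dots, y_f \in \mathfrak{O}_L$ have residues $y_1\nu, \dots, y_f\nu$ linearly independent over $K\nu$, then the $ef$ products $x_i y_j$ are linearly independent over $K$: in any relation $\sum_{i,j} c_{ij} x_i y_j = 0$ with $c_{ij} \in K$ not all zero, linear independence of the residues forces $\nu(\sum_j c_{ij} y_j) = \mathrm{min}_j\, \nu(c_{ij}) \in \nu K$ for each $i$, so the terms $\nu(x_i) + \nu(\sum_j c_{ij} y_j)$ lie in distinct cosets of $\nu K$, the minimum of the left-hand side is attained uniquely, and the sum cannot be $0$. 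Hence $ef \leq [L:K]$, so the defect $d := [L:K]/(ef)$ is a well-defined rational number with $d \geq 1$; it remains to show $d$ is a power of $p$. I then pass to henselizations: set $K^h$ and $L^h = L \cdot K^h$. Since $\nu$ extends uniquely to $L$, one has $[L^h : K^h] = [L:K]$, and because $(K^h/K,\nu)$ and $(L^h/L,\nu)$ are immediate, the indices $e$ and $f$ are unchanged; thus $d$ is unchanged and I may assume $(K,\nu)$ is henselian. In particular every algebraic extension of $K$ now carries a unique extension of $\nu$, and $e$, $f$, $[\,\cdot\,]$, hence $d$, are multiplicative in finite towers.

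Next I would choose a finite normal extension $N/K$ with $L \subseteq N$ (a normal closure). Over the henselian field $K$, the extension $N/K$ has well-defined inertia and ramification subfields $K \subseteq T \subseteq V \subseteq N$: writing $G = \mathrm{Aut}(N/K)$, the inertia group $G_i \trianglelefteq G$ acts trivially on $N\nu$, and the ramification group $G_r \trianglelefteq G_i$ is its $p$-Sylow subgroup. The standard structure theory gives $\nu T = \nu K$ with $[T:K] = [N\nu : K\nu]_{\mathrm{sep}}$, it gives $[V:T] = |G_i/G_r|$ equal to the prime-to-$p$ part of $[\nu N : \nu K]$, and it exhibits $V/K$ as tame, hence \emph{defectless}, so $d(V/K) = 1$. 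The remaining step $[N:V] = |G_r|$ is a power of $p$, and it absorbs the $p$-part of the ramification index, the inseparable part of the residue degree, and the entire defect. By multiplicativity $d(N/K) = d(N/V)\, d(V/K) = d(N/V)$, while $d(N/V) = [N:V]/(e_{N/V} f_{N/V})$ is a power of $p$, since $[N:V] = |G_r|$ and the factors $e_{N/V}$, $f_{N/V}$ are themselves powers of $p$ (the value-group extension $\nu N/\nu V$ being a $p$-group and the residue extension $N\nu/V\nu$ purely inseparable). Therefore $d(N/K)$ is a power of $p$. To descend to $L$: since $L/K$ is finite and $K$ henselian, $L$ is henselian and $N/L$ is again a finite normal extension of a henselian field, so $d(N/L)$ is likewise a power of $p$; by multiplicativity in $K \subseteq L \subseteq N$ we get $d(L/K) = d(N/K)/d(N/L)$, a quotient of two powers of $p$, which is $\geq 1$ by the fundamental inequality and hence a nonnegative integer power of $p$. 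This is exactly $[L:K] = d\cdot e\cdot f$ with $d$ a power of $p$ (and $d=1$ when $p=1$, since then $G_r$ is trivial).

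The main obstacle is the structure theory underlying the middle paragraph: proving that over a henselian base the ramification field $V$ yields a defectless tame extension with $[V:K] = [\nu V:\nu K]\,[V\nu:K\nu]$, and that $G_r = \mathrm{Aut}(N/V)$ is precisely the $p$-Sylow of the inertia group, so that $[N:V]$ is a $p$-power carrying all the wild behaviour. A secondary technical point is the reduction when $N/K$ (or $L/K$) is inseparable: one splits off the purely inseparable part, for which $[L:K]$, $e$ and $f$ are all powers of $p$ so that $d$ is automatically a $p$-power, and applies the ramification-group analysis to the separable part.
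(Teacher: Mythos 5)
The paper never proves this lemma: it is stated in Section~2 as classical background (with the surrounding ramification theory cited to Endler, Kuhlmann and Zariski--Samuel), so there is no in-paper proof to match your argument against. Judged on its own, your proposal is a correct and essentially complete reconstruction of the standard literature proof: the fundamental inequality $ef \leq [L:K]$ via the $x_i y_j$ independence argument is right (including the key computation $\nu(\sum_j c_{ij} y_j) = \min_j \nu(c_{ij}) \in \nu K$ from residue independence); the passage to henselizations is legitimate because $L^h = L \cdot K^h$, henselization is immediate, and unique extension of $\nu$ gives $[L^h : K^h] = [L:K]$; and the normal-closure argument with the filtration $K \subseteq T \subseteq V \subseteq N$, multiplicativity of $d$, and the quotient $d(L/K) = d(N/K)/d(N/L)$ being a power of $p$ that is $\geq 1$ (hence with nonnegative exponent) is exactly how the classical proof closes. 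Your explicit handling of the inseparable case is also needed and correct: for the purely inseparable part, $[L:K]$, $e$ and $f$ are all $p$-powers, and the degree preservation $[L^h : K^h] = [L:K]$ in that case rests on linear disjointness of purely inseparable and separable extensions, which your split-off implicitly uses and would deserve a line.

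Two cautions on the step you yourself flag as the main obstacle. First, in this paper's terminology you cannot argue ``$V/K$ is tame, hence defectless,'' because the paper \emph{defines} a tame extension to be defectless; the substantive content is the structure theorem for the ramification field (that $V/T$ is generated by prime-to-$p$ radicals liftable by Hensel's lemma, giving $[V:T]$ equal to the prime-to-$p$ part of $[\nu N : \nu K]$ and $[T:K] = [N\nu : K\nu]_{\mathrm{sep}}$), which must be invoked from, or proven as in, the structure theory of $K^i$ and $K^r$ (e.g.\ the Theorem~11.1-type results the paper cites) --- and one should note these are established independently of Ostrowski's lemma, so no circularity arises. Second, when $N/K$ is not separable the groups $G^i$, $G^r$ only see the separable part $N \cap K^{\mathrm{sep}}$, so the Sylow statement $G_r \in \mathrm{Syl}_p(G_i)$ and the identity $[N:V] = |G_r|$ require the separable reduction you mention before they are literally true; with that reduction performed first, the argument goes through as written.
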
 
The number $d$ is called the \textbf{defect} of the extension $(L/K, \nu)$. If $d =1$ the extension is said to be defectless. In particular, if char $K \nu = 0$, the extension is always defectless. 
\par The \textbf{rank} of a valuation $(K, \nu)$ is rk $\nu$ = rk $\nu K$ = dim $\mathfrak{O}_K$ where dimension means the Krull dimension. The \textbf{rational rank} of $(K, \nu)$ is rat rk $\nu$ = rat rk $\nu K$ = dim$_{\QQ} (\nu K \tensor_\ZZ \QQ)$ where the dimension is as a $\QQ$ vector space. We have rk $\nu \leq$ rat rk $\nu$ [\ref{ZS2}]. Also the rank and rational rank of a valuation remain invariant under finite extensions of the valued field [\ref{ZS2}].
\par We now quickly recall some aspects of ramification theory [cf. \ref{Abhyankar ramification book}, \ref{Endler}, \ref{Kuh vln model}, \ref{ZS2}]. Let $(L/K, \nu) $ be an algebraic extension of valued fields such that the field extension $L/K$ is normal. Let $G = Aut (L/K)$. The set of all extensions of $\nu$ to $L$ are given by $\{ \nu \circ \sigma \mid \sigma \in G \}$. We define some important subgroups of $G$ and their corresponding fixed fields, and state some of their properties. By abuse of notation we will denote the restriction of $\nu$ to $K$ as $\nu$. The \textbf{decomposition group} of $\nu$ is defined as 
\begin{equation*}
	G^d = \{ \sigma \in G \mid \nu \circ \sigma = \nu \text{ on } L \}.
\end{equation*}
The \textbf{inertia group} is defined as 
\begin{equation*}
	G^i = \{ \sigma \in G \mid \nu (\sigma x - x ) > 0 \, \forall \, x \in \mathfrak{O}_L \}.
\end{equation*}
The \textbf{ramification group} is defined as 
\begin{equation*}
G^r = \{ \sigma \in G \mid \nu (\sigma x - x ) > \nu(x) \, \forall \, x \in \mathfrak{O}_L \}.
\end{equation*}
Let $S = K^{sep}$ denote the separable closure of $K$ in $L$. The corresponding fixed fields in $S$ will be denoted as $K^d$, $K^i$ and $K^r$ and are respectively called the \textbf{decomposition field}, \textbf{inertia field} and the \textbf{ramification field}. We have, 
\begin{equation*}
	G^r \trianglelefteq G^i \trianglelefteq G^d \leq G  \text{ and } G^r \trianglelefteq G^d.
\end{equation*}
Thus, Gal $(L/K^i) = G^i$, Gal $(L/K^r) = G^r$ and Gal $(K^r / K^i) = G^i / G^r$. We also have, 
\begin{itemize}
	\item $(K^d/ K, \nu)$ is an immediate extension, \\
	\item $\nu K^i = \nu K, \, K^i \nu = (K \nu)^{\text{sep}}$ is the separable closure of $K \nu \text{ in }L \nu$, \\
	\item $\nu K^r = (\nu L : \nu K)^{p^{\prime}} = \{ \gamma \in \nu L \mid n \gamma \in \nu K \text{ for some } n \in \NN \text{ coprime to }p \}, \, K^r \nu = K^i \nu$, \\
	\item $S/K^r \text{ is a } p-\text{extension}$.
\end{itemize}
In the case when $L = \overline{K}$, these are called the \textbf{absolute decomposition field (group), absolute inertia field (group)} and \textbf{absolute ramification field (group)}. 
 \par A valued field $(K, \nu)$ is said to be \textbf{henselian} if $\mathfrak{O}_K$ is a henselian ring, that is, it satisfies Hensel's Lemma [\ref{Nagata henselian}, \ref{Kuh vln model}, \ref{Kuh defect}]. From [\ref{Nagata henselian}, Lemma 3] and [\ref{ZS2}, Chapter VI, Section 7, Corollary 2], this is equivalent to saying that $\nu$ has an unique extension from $K$ to its algebraic closure $\overline{K}$, and hence to any algebraic extension of $K$. Thus Ostrowski's Lemma holds in the case of finite extensions of henselian valued fields. If $(K, \nu)$ is a henselian valued field, then $K = K^d$, the absolute decompostion field. An algebraic extension of a henselian valuation is again henselian [cf. \ref{Kuh vln model}, Corollary 1]. 
\newline For an arbitrary valued field $(K, \nu)$, $K^d$ is the least algebraic extension satisfying the Hensel's Lemma. We call $K^d$ to be the \textbf{henselization} of $K$. It will also be denoted by $K^h$. Thus henselization is always an immediate extension. 
\par Let $(L/K, \nu)$ be an algebraic extension of henselian valued fields. The extension $(L/ K, \nu)$ is said to be \textbf{tame} if for any finite subextension $L^{\prime}/K$, the following conditions are satisfied. 
\begin{itemize}
	\item $p$ does not divide $[\nu L^{\prime} : \nu K]$, \\
	\item $L^{\prime} \nu / K \nu \text{ is a separable extension}$, \\
	\item $(L^{\prime}/K, \nu) \text{ is a defectless extension}$.
\end{itemize}
It is known that for a henselian valued field $(K, \nu)$, the absolute ramification field $K^r$ is the unique maximal tame extension. Moreover, the absolute inertia field $K^i$ is the unique maximal tame extension such that there is no extension of value groups [cf. \ref{Kuh vln model}, Theorem 11.1]. In particular, a tame extension is always separable. 
\newline An extension $(L/K, \nu)$ of henselian valued fields is said to be \textbf{unramified} if $(L/K, \nu)$ is tame and $\nu L = \nu K$. From the preceding discussions, this is equivalent to the condition that $L \subseteq K^i$.
\newline The extension $(L/K, \nu)$ is said to be \textbf{purely wild} if $L/K$ and $K^r/K$ are linearly disjoint. Since $K^r/K$ is a Galois extension, this is equivalent to the condition $L \sect K^r = K$. Another equivalent characterization of a purely wild extension is, 
\begin{itemize}
	\item $\nu L / \nu K \text{ is a p-group}$,\\
	\item $L \nu / K \nu \text{ is purely inseparable}$.
\end{itemize}

\section{Abhyankar's Lemma}

In Lemma \ref{Abhyankar lemma} Abhyankar gave a sufficient condition to eliminate tame ramification after a finite extension of the base field [cf. \ref{Narkiewicz}, Chapter 5, Corollary 4]. That the condition is necessary follows easily from the multiplicative property of ramification indices. Evidently, $(L/L_2, \nu)$ is unramified implies $\nu L = \nu L_2$. Thus, with the notations as in Lemma \ref{Abhyankar lemma}, $e_2 = [\nu L : \nu K] = [\nu L : \nu L_1]e_1$ and hence $e_1$ divides $e_2$.
\par Since $\mathfrak{P}$-adic fields are defectless and in particular henselian, we want to investigate whether this condition extends to henselian valued fields. Let $(K, \nu)$ be a henselian valued field, $(L_1/K, \nu)$ be a finite tame extension and $(L_2/K, \nu)$ be an arbitrary finite extension. Let $e_i = [\nu L_i : \nu K], \, i = 1,2$. Let $L = L_1 \cdot L_2$ denote their compositum. Since an algebraic extension of a henselian field is henselian, we have that $L, L_1$ and $L_2$ are henselian. Suppose that $(L/ L_2, \nu)$ is unramified. Then using the same argument as above, we have that $e_1$ divides $e_2$ is a necessary condition to eliminate ramification. However the condition is not sufficient for henselian valued fields, as illustrated by the following example. 

\begin{Example} \label{Counterexample to abhyankar lemma}
		Let $k$ be a field with char $k \neq 2$ and $k(X,Y)$ denote the function field in two variables. Define a map $\nu : k[X,Y] \longrightarrow \ZZ \dirsum \ZZ$ by 
		\begin{equation*}
		\nu (\sum_{a,b \in \NN} c_{a,b} X^a Y^b) = \text{min} \{(a,b) \mid c_{a,b} \neq 0 \}
		\end{equation*}
		where the ordering taken is lex order. We extend $\nu$ to a valuation on $k(X,Y)$ by defining $\nu (\frac{f}{g}) := \nu(f) - \nu(g)$ where $f,g \in k[X,Y]$. Set $K = (k(X,Y), \nu)^h$ to be the henselization of $k(X,Y)$ with respect to $\nu$. Let $L_1 = K(\sqrt{X})$ and $L_2 = K(\sqrt{Y})$. Let $L = L_1 \cdot L_2$. Then $(L/L_2, \nu)$ is ramified.  
\end{Example}

\begin{proof}
	From [\ref{Abhyankar ramification book}, Chapter II, section 9, example 4], we have that $\nu$ is a valuation. The value group $\nu k(X,Y) = (\ZZ \dirsum \ZZ)_{lex}$. Since henselization is immediate, we have $\nu K = (\ZZ \dirsum \ZZ)_{lex}$. We further observe that $\nu$ is trivial on $k$. Thus $\nu k = 0$ and $k \nu = k \subseteq K \nu$. In particular, char $K \nu \neq 2$.  
	Since $K$ is henselian, we extend $\nu$ uniquely to $\overline{K}$. We know that $\nu \overline{K}$ is the divisible hull of $\nu K$, that is, $\nu \overline{K} = \QQ \tensor_\ZZ \nu K$. So, $\nu \overline{K}$ is $\QQ \dirsum \QQ$ equipped with the lex order. 
	\newline We observe that $2 \nu(\sqrt{X}) = \nu (X) = (1,0) \Longrightarrow \nu (\sqrt{X}) = (\frac{1}{2}, 0) \notin  \nu K$. Thus $\sqrt{X} \notin K$. Hence $[L_1 :K] = [K(\sqrt{X}) : K] = 2$. Again $Y \sqrt{X} \in L_1 \Longrightarrow (\frac{1}{2}, 1)  \in \nu L_1 \Longrightarrow \frac{1}{2} \ZZ \dirsum \ZZ \subset \nu L_1$. Thus $e_1 = [\nu L_1 : \nu K] \geq 2$. From Lemma \ref{Ostrowski} we then have, 
	\begin{equation*}
		[L_1 : K] = e_1 = 2, \, L_1 \nu = K \nu, \, \nu L_1 = \frac{1}{2}\ZZ \dirsum \ZZ .
	\end{equation*}
	Similarly, denoting $e_2 = [\nu L_2 : \nu K]$ we obtain,
	\begin{equation*}
		[L_2 : K] = e_2 = 2, \, L_2 \nu = K \nu, \, \nu L_2 = \ZZ \dirsum \frac{1}{2}\ZZ .
	\end{equation*}
	Now, $L = L_1 \cdot L_2 = K(\sqrt{X}, \sqrt{Y}) \Longrightarrow [L : K] \leq 4$. Again $\sqrt{X^m Y^n} \in L \, \forall \, m,n \in \ZZ$. Thus $\nu(\sqrt{X^m Y^n}) = (\frac{m}{2}, \frac{n}{2}) \in \nu L \, \forall \, m,n \in \ZZ $. Hence $\frac{1}{2} \ZZ \dirsum \frac{1}{2} \ZZ \subset \nu L$. Using the same arguments as above, we obtain 
	\begin{equation*}
		[L:K] = [\nu L : \nu K] = 4, \, L \nu = K \nu, \, \nu L = \frac{1}{2} \ZZ \dirsum \frac{1}{2} \ZZ .
	\end{equation*}
	Since char $K \nu \neq 2$, we observe that $L/K$ is a tame extension, and thus every subextension is also tame. Then $\nu L \neq \nu L_2$ implies that $(L/L_2, \nu)$ is a ramified extension. 
\end{proof}

\section{Elimination of ramification}
In this section, we give a necessary and sufficient condition to eliminate tame ramification of a henselian valued field after lifting through a finite extension. Throughout this section, $K^d, K^i$ and $K^r$ will respectively denote the absolute decomposition field, absolute inertia field and the absolute ramification field of a valued field $(K, \nu)$.

\begin{Lemma}\label{tame wild lemma}
	Let $(K, \nu)$ be a henselian field such that $K \nu$ is perfect. Let $(L_1/ K, \nu)$ be a finite tame extension and $(L_2/ K, \nu)$ be a finite purely wild extension. Set $L = L_1 \cdot L_2$. Then $[\nu L : \nu K] = [\nu L_1 : \nu K] [\nu L_2 : \nu K]$. Moreover $(L/L_1, \nu)$ is purely wild and $(L/L_2, \nu)$ is tame. 
\end{Lemma}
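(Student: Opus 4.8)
The plan is to derive everything from Ostrowski's Lemma (Lemma \ref{Ostrowski}) applied to the extension $(L/L_2, \nu)$, which is henselian with a unique extension of $\nu$, by squeezing $[L:L_2]$ between an upper bound on the degree and lower bounds on its ramification index and residual degree. Write $e_1 = [\nu L_1 : \nu K]$, $e_2 = [\nu L_2 : \nu K]$ and $f_1 = [L_1 \nu : K \nu]$. Since $L_1/K$ is tame it is defectless, so $[L_1 : K] = e_1 f_1$ with $e_1$ prime to $p$; since $L_2/K$ is purely wild, $e_2$ is a power of $p$. Because $L = L_1 \cdot L_2$, the usual submultiplicativity of degrees in a compositum gives the upper bound $[L : L_2] \leq [L_1 : K] = e_1 f_1$.

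Next I would bound the two invariants of $(L/L_2,\nu)$ from below. For the value groups I would observe that $\nu L_1/\nu K$ and $\nu L_2/\nu K$ are finite subgroups of $\nu \overline{K}/\nu K$ of coprime orders $e_1$ and $e_2$; hence they intersect trivially and generate a subgroup of order $e_1 e_2$, which forces $\nu L_1 \cap \nu L_2 = \nu K$ and $[\langle \nu L_1, \nu L_2 \rangle : \nu L_2] = e_1$. As this subgroup lies inside $\nu L$, this yields $e' := [\nu L : \nu L_2] \geq e_1$. For the residue fields I would use that $L_1 \nu/K \nu$ is separable of degree $f_1$ while $L_2 \nu/K \nu$ is purely inseparable (indeed $L_2 \nu = K \nu$, since $K \nu$ is perfect); separable and purely inseparable extensions being linearly disjoint, $[L_1 \nu \cdot L_2 \nu : L_2 \nu] = f_1$, and since $L_1 \nu \cdot L_2 \nu \subseteq L \nu$ this gives $f' := [L \nu : L_2 \nu] \geq f_1$.

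Now I would run the squeeze. By Ostrowski, $[L:L_2] = d' e' f'$ with $d'$ a power of $p$, so the bounds combine into $e_1 f_1 \geq [L:L_2] = d' e' f' \geq e' f' \geq e_1 f_1$. Every inequality is therefore an equality, forcing $d' = 1$, $e' = e_1$, $f' = f_1$, and in addition $\nu L = \langle \nu L_1, \nu L_2 \rangle$ and $L \nu = L_1 \nu \cdot L_2 \nu$. The product formula is then immediate, $[\nu L : \nu K] = [\nu L : \nu L_2][\nu L_2 : \nu K] = e_1 e_2 = [\nu L_1 : \nu K][\nu L_2 : \nu K]$. For tameness of $(L/L_2, \nu)$ I would check the three defining conditions: $d' = 1$ is defectlessness, $e' = e_1$ is prime to $p$, and $L \nu = L_2 \nu(L_1 \nu)$ is separable over $L_2 \nu$ because $L_1 \nu/K \nu$ is separable. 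For pure wildness of $(L/L_1, \nu)$ I would verify the equivalent characterization recorded in Section 2: from $\nu L = \langle \nu L_1, \nu L_2 \rangle$ and $\nu L_1 \cap \nu L_2 = \nu K$ the second isomorphism theorem gives $\nu L/\nu L_1 \cong \nu L_2/\nu K$, a $p$-group, while $L \nu/L_1 \nu = L_1 \nu(L_2 \nu)/L_1 \nu$ is purely inseparable as a base change of the purely inseparable extension $L_2 \nu/K \nu$.

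The step I expect to require the most care is the coprimality argument for the value groups together with the extraction of the exact equalities $\nu L = \langle \nu L_1, \nu L_2 \rangle$ and $L \nu = L_1 \nu \cdot L_2 \nu$ from the Ostrowski squeeze: the entire argument hinges on the fact that a prime-to-$p$ ramification and a $p$-power ramification cannot interfere, and that saturating the inequalities in Ostrowski's Lemma pins down the value group and residue field exactly. Once these equalities are in hand, the tameness, the pure wildness, and the product formula all follow formally.
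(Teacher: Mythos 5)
Your proof is correct, but it is organized differently from the paper's. The paper first invokes two structural facts as inputs: the linear disjointness of $L_1/K$ and $L_2/K$ (from $L_1 \subseteq K^r$ and $L_2 \cap K^r = K$), which gives $[L:L_2] = [L_1:K]$ exactly, and Kuhlmann's identity $L_2^r = L_2 \cdot K^r$, which yields $L \subseteq L_2^r$ and hence the tameness of $(L/L_2,\nu)$ \emph{a priori}; it then pins down the invariants by a divisibility chase ($q_2 \mid e_1$ and $e_1 \mid q_2$ using $\gcd(e_1,p)=1$). You instead take only the inequality $[L:L_2] \leq [L_1:K] = e_1 f_1$, produce independent lower bounds $[\nu L : \nu L_2] \geq e_1$ (from the coprime subgroups $\nu L_1/\nu K$ and $\nu L_2/\nu K$ of $\nu\overline{K}/\nu K$) and $[L\nu : L_2\nu] \geq f_1$ (from $L_2\nu = K\nu$ and separability of $L_1\nu/K\nu$), and close the gap with Ostrowski's Lemma; tameness of $(L/L_2,\nu)$, the equality $[L:L_2]=[L_1:K]$, and the identifications $\nu L = \nu L_1 + \nu L_2$, $L\nu = L_1\nu$ all come out as consequences of the saturated inequalities rather than going in as hypotheses. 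The trade-off is that your argument is more self-contained -- it avoids the ramification-field machinery ($L_2^r = L_2\cdot K^r$) entirely and even reproves the linear disjointness -- while the paper's route is shorter once those external results are granted and makes the tameness of $(L/L_2,\nu)$ conceptually transparent ($L$ sits inside the ramification field of $L_2$). Both arguments rest on the same underlying fact that a prime-to-$p$ ramification and a $p$-power ramification cannot interact.
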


\begin{proof} 
Let $p$ denote the residue characteristic of $(K, \nu)$. Set $e = [\nu L : \nu K], \, f = [L \nu : K \nu], \,  e_i = [\nu L_i: \nu K], \, q_i = [\nu L : \nu L_i], \, f_i = [L_i \nu : K \nu], \, g_i = [L \nu : L_i \nu], \, i = 1,2$. The defect of any extension $(F_1 / F_2, \nu)$ will be denoted by $d(F_1/ F_2, \nu)$. If the valuation $\nu$ is tacitly understood, we will simply use the notation $d(F_1/ F_2)$.
\newline Since the ramification indices and residual degrees are multiplicative, we have
\begin{equation*}
	e = e_1 q_1 = e_2 q_2 \text{ and } f = f_1 g_1 = f_2 g_2.
\end{equation*}
$L_2/K$ is purely wild implies $L_2/K$ and $K^r/K$ are linearly disjoint. And $(L_1/K, \nu)$ is tame implies $L_1 \subseteq K^r$. From the definition of linear disjointness, it then follows that $L_1/K$ and $L_2/K$ are linearly disjoint, that is, $[L:K] = [L_1: K][L_2:K]$.
\newline [\ref{Kuh vln model}, Theorem 5.10] implies that $L_2 ^r = L_2 \cdot K^r$. Thus, 
\begin{align*}
	K \subseteq L_1 \subseteq K^r \Longrightarrow L_2 \subseteq L \subseteq L_2 ^r .
\end{align*}
Thus $(L/ L_2, \nu)$ is a tame extension. In particular, $d(L/L_2) = 1$.
\newline We recall that any algebraic extension of a henselian field is again henselian. So in our case, all field extensions are finite extensions of henselian valued fields, hence we can apply Lemma \ref{Ostrowski}. Now $(L_2/K, \nu)$ is purely wild implies $L_2 \nu / K \nu$ is a purely inseparable extension. Since $K \nu$ is perfect, we have $L_2 \nu = K \nu$. Thus, $f_2 = 1$, that is $f = g_2$. Also $\nu L_2 / \nu K$ is a $p$-group. We then have, 
\begin{equation*}
	[L_2 : K] = e_2 d(L_2/K) = p^n p^m \text{ where }n,m \in \NN .
\end{equation*}
Also, $(L/ L_2, \nu)$ is tame implies $[L : L_2] = q_2 g_2 = q_2 f$. Thus, 
\begin{equation*}
	[L:L_2] = [L_1 : K] \Longrightarrow q_2 f = e_1 f_1 \Longrightarrow q_2 g_1 = e_1 \Longrightarrow q_2 \text{ divides } e_1.
\end{equation*}
Again, $e = e_1 q_1 = e_2 q_2 = p^n q_2$. $(L_1/K, \nu)$ is tame implies gcd$(e_1, p) = 1$. Thus gcd$(e_1, p^n) = 1$. So, 
\begin{equation*}
	e_1 q_1 = p^n q_2 \Longrightarrow p^n \mid q_1 \Longrightarrow e_1 \text{ divides } q_2.
\end{equation*}
So we obtain, 
\begin{equation*}
	e_1 = q_2, \, q_1 = p^n = e_2, \, f = f_1, \, g_1 = 1.
\end{equation*}
Thus $L \nu = L_1 \nu$. And $q_1 = p^n$ implies $\nu L / \nu L_1$ is a $p$-group. So $(L/L_1, \nu)$ is a purely wild extension. 
\newline Moreover, $e= q_2 e_2 = e_1 e_2$, that is, $[\nu L : \nu K] = [\nu L_1: \nu K][\nu L_2 : \nu K]$. Hence we have the lemma.

\end{proof}

\begin{Lemma}\label{enough to consider tame for unramified}
	Let $(K, \nu)$ be a henselian field such that $K \nu$ is perfect. Let $(L_1/K, \nu)$ be a finite tame extension and $(L_2/K, \nu)$ be a finite extension. Set $L = L_1 \cdot L_2$. Set $L_2 ^{\prime} = L_2 \sect K^r$ and $L^{\prime} = L_1 \cdot L_2 ^{\prime}$. Then $(L/L_2, \nu)$ and $(L^{\prime}/ L_2 ^{\prime} , \nu)$ are finite tame extensions. Moreover $(L/L_2, \nu)$ is unramified if and only if $(L^{\prime} / L_2 ^{\prime}, \nu)$ is unramified. 
\end{Lemma}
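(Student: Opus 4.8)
The plan is to reduce the whole statement to a single application of Lemma \ref{tame wild lemma} over the base field $L_2'$. First I would pin down the role of $L_2' = L_2 \cap K^r$: since $K^r$ is the maximal tame extension of $K$, every tame subextension of $L_2/K$ lies in $K^r$ and hence in $L_2'$, while $L_2'/K$ is itself tame as a subextension of $K^r/K$; thus $L_2'$ is the maximal tame subextension of $L_2/K$. I would then show that $(L_2/L_2', \nu)$ is purely wild. Applying [\ref{Kuh vln model}, Theorem 5.10] to $L_2'/K$ gives $(L_2')^r = L_2' \cdot K^r = K^r$, the last equality because $L_2' \subseteq K^r$; consequently $L_2 \cap (L_2')^r = L_2 \cap K^r = L_2'$, which is exactly the defining condition of pure wildness.

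Next I would verify the hypotheses needed to invoke Lemma \ref{tame wild lemma} over $L_2'$. Since $L_1 \subseteq K^r = (L_2')^r$ and trivially $L_2' \subseteq (L_2')^r$, the compositum $L' = L_1 \cdot L_2'$ lies in $(L_2')^r$, so $(L'/L_2', \nu)$ is a finite tame extension; this gives half of the first assertion outright. Because $L_2'/K$ is tame, the residue extension $L_2'\nu/K\nu$ is finite and separable, so $L_2'\nu$ is a finite separable extension of the perfect field $K\nu$ and is therefore itself perfect. Finally, since $L_2' \subseteq L_2$, the compositum of $L'$ and $L_2$ over $L_2'$ is $L_1 \cdot L_2' \cdot L_2 = L_1 \cdot L_2 = L$.

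Now Lemma \ref{tame wild lemma}, applied over the henselian base $L_2'$ with tame extension $L'/L_2'$ and purely wild extension $L_2/L_2'$ (whose compositum is $L$), shows that $(L/L_2, \nu)$ is tame, completing the first assertion, and yields the relation
\begin{equation*}
[\nu L : \nu L_2'] = [\nu L' : \nu L_2'] \, [\nu L_2 : \nu L_2'].
\end{equation*}
Comparing with the tower law $[\nu L : \nu L_2'] = [\nu L : \nu L_2] \, [\nu L_2 : \nu L_2']$ and cancelling the common finite factor $[\nu L_2 : \nu L_2']$, I obtain $[\nu L : \nu L_2] = [\nu L' : \nu L_2']$.

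For the equivalence I would use that both $(L/L_2, \nu)$ and $(L'/L_2', \nu)$ are now known to be tame, so each is unramified exactly when its ramification index equals $1$, i.e. when the respective value groups coincide. By the equality just derived, $[\nu L : \nu L_2] = 1$ if and only if $[\nu L' : \nu L_2'] = 1$, which gives the desired equivalence. The step I expect to be the crux is the identification $(L_2')^r = K^r$ and the resulting pure wildness of $L_2/L_2'$, since this is precisely what licenses the application of Lemma \ref{tame wild lemma} that drives the argument; the perfectness of $L_2'\nu$ is needed only to make that lemma applicable over the new base field.
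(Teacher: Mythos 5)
Your proof is correct and follows essentially the same route as the paper: establish that $(L_2/L_2',\nu)$ is purely wild via $(L_2')^r = L_2'\cdot K^r = K^r$, that $L' \subseteq (L_2')^r$ is tame, and then apply Lemma \ref{tame wild lemma} over the base $L_2'$ to get tameness of $(L/L_2,\nu)$ and the index identity $[\nu L : \nu L_2] = [\nu L' : \nu L_2']$, from which the equivalence follows. The only additions are your (correct) remarks that $L_2'$ is the maximal tame subextension and that $L_2'\nu$ is perfect as an algebraic extension of the perfect field $K\nu$, which the paper leaves implicit.
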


\begin{proof}
	From [\ref{Kuh vln model}, Theorem 5.10] we have that $L_2 ^r = L_2 \cdot K^r$ and $(L_2 ^{\prime})^r = L_2 ^{\prime} \cdot K^r = K^r$. So, $L_2 \sect (L_2 ^{\prime})^r = L_2 \sect K^r = L_2 ^{\prime}$. Hence $(L_2/ L_2 ^{\prime} , \nu)$ is a finite purely wild extension. 
	\newline Now, $L_1 \subseteq K^r$ as $(L_1/K, \nu)$ is a tame extension. So, $L^{\prime} = L_1 \cdot L_2 ^{\prime} \subseteq K^r = (L_2 ^{\prime})^r$ implies $(L^{\prime}/ L_2 ^{\prime} , \nu)$ is finite tame extension. 
	\newline Now, $L^{\prime} \cdot L_2 = L_1 \cdot L_2 ^{\prime} \cdot L_2 = L_1 \cdot L_2 = L$. $(L_2 ^{\prime} /K, \nu)$ is a finite extension, hence $(L_2 ^{\prime}, \nu)$ is a henselian valued field such that $L_2 ^{\prime} \nu$ is perfect. So from Lemma \ref{tame wild lemma} we have
	\begin{equation*}
		[\nu L : \nu L_2 ^{\prime}] = [\nu L^{\prime} : \nu L_2 ^{\prime}] [\nu L_2 : \nu L_2 ^{\prime}] .
	\end{equation*}
	 Again $[\nu L : \nu L_2 ^{\prime}] = [\nu L : \nu L_2] [\nu L_2 : \nu L_2 ^{\prime}]$. Thus we obtain, 
	\begin{equation*}
	[\nu L^{\prime} : \nu L_2 ^{\prime}] = [\nu L : \nu L_2].
	\end{equation*}
	Lemma \ref{tame wild lemma} further implies that $(L/ L_2, \nu)$ is a finite tame extension. Thus $(L/L_2, \nu)$ is umramified, that is, $\nu L = \nu L_2$, if and only if $(L^{\prime} / L_2 ^{\prime}, \nu)$ is unramified.
\end{proof}

\begin{Lemma}\label{both tame, unramified}
	Let $(K, \nu)$ be a henselian field. Let $(L_1/ K, \nu)$ and $(L_2/K, \nu)$ be finite tame extensions. Set $L = L_1 \cdot L_2$ and $e_i = [\nu L_i : \nu K]$, $i = 1,2$. For any $\theta \in \mathfrak{O}_L$ let $d_{\theta} = [L_1 \nu (\theta \nu) : L_1 \nu]$. Then, 
	\begin{equation*}
	(L/L_2, \nu) \text{ is unramified if and only if }  e_2 d_{\theta}= e_1 [ L :  L_1] \text{ for some } \theta \in \mathfrak{O}_L.
	\end{equation*}
	In this case, $L \nu = L_1 \nu (\theta \nu)$ and $e_1 \text{ divides } e_2$.
\end{Lemma}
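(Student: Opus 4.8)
The plan is to reduce the whole statement to the fundamental equality for tame (hence defectless) extensions together with one elementary divisibility observation about the residue degree $d_{\theta}$. First I would record that every extension in sight is tame. Since $(L_1/K,\nu)$ and $(L_2/K,\nu)$ are tame we have $L_1,L_2\subseteq K^r$, so $L=L_1\cdot L_2\subseteq K^r$; and by [\ref{Kuh vln model}, Theorem 5.10] one has $L_i^r=L_i\cdot K^r=K^r$, whence $L\subseteq K^r=L_1^r=L_2^r$. Thus $(L/L_1,\nu)$ and $(L/L_2,\nu)$ are tame, in particular defectless. Writing $e=[\nu L:\nu K]$, $q_i=[\nu L:\nu L_i]$ and $g_i=[L\nu:L_i\nu]$, Lemma \ref{Ostrowski} gives $[L:L_i]=q_i g_i$, while multiplicativity of ramification indices gives $e=e_1 q_1=e_2 q_2$.

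The key elementary point is that for any $\theta\in\mathfrak{O}_L$ the residue $\theta\nu$ lies in $L\nu$, so $L_1\nu\subseteq L_1\nu(\theta\nu)\subseteq L\nu$ and therefore $d_{\theta}=[L_1\nu(\theta\nu):L_1\nu]$ divides $g_1=[L\nu:L_1\nu]$; in particular $d_{\theta}\leq g_1$. Using $e_1[L:L_1]=e_1 q_1 g_1=e g_1$ and $e=e_2 q_2$, the stated equation $e_2 d_{\theta}=e_1[L:L_1]$ is equivalent to $d_{\theta}=q_2 g_1$. For the forward implication I would combine $d_{\theta}=q_2 g_1$ with $d_{\theta}\leq g_1$ and $q_2\geq 1$ to force $q_2=1$ and $d_{\theta}=g_1$. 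Then $q_2=1$ says $\nu L=\nu L_2$, i.e. $(L/L_2,\nu)$ is unramified; $d_{\theta}=[L_1\nu(\theta\nu):L_1\nu]=[L\nu:L_1\nu]$ together with $L_1\nu(\theta\nu)\subseteq L\nu$ gives $L\nu=L_1\nu(\theta\nu)$; and $q_2=1$ yields $e_2=e=e_1 q_1$, so $e_1$ divides $e_2$.

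For the converse I would assume $(L/L_2,\nu)$ unramified, so $q_2=1$ and the condition reduces to exhibiting $\theta$ with $d_{\theta}=g_1$, i.e. with $L_1\nu(\theta\nu)=L\nu$. Here tameness is used a second time: $L\nu/L_1\nu$ is a finite separable extension, so by the primitive element theorem there is $\bar\alpha\in L\nu$ with $L\nu=L_1\nu(\bar\alpha)$; lifting $\bar\alpha$ through the surjection $\mathfrak{O}_L\twoheadrightarrow L\nu$ produces $\theta\in\mathfrak{O}_L$ with $\theta\nu=\bar\alpha$, hence $d_{\theta}=g_1$, which is exactly the required identity $e_2 d_{\theta}=e_2 g_1=e g_1=e_1[L:L_1]$.

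The computation is essentially mechanical once the framework is in place, so I expect the only delicate points to be the two places where tameness does the real work: justifying that $(L/L_1,\nu)$ and $(L/L_2,\nu)$ are defectless so that $[L:L_i]=q_i g_i$ is legitimate (this is where $L\subseteq K^r=L_i^r$ enters), and securing the generating element $\theta$ in the converse, which rests on separability of $L\nu/L_1\nu$ via the primitive element theorem together with surjectivity of the residue map.
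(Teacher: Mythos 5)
Your proposal is correct and follows essentially the same route as the paper's proof: both reduce the statement to Ostrowski's lemma for the tame (hence defectless) extension $L/L_1$, use the bound $d_{\theta}\leq[L\nu:L_1\nu]$ to get unramifiedness from the numerical condition, and use separability of $L\nu/L_1\nu$ plus the primitive element theorem (with a lift through $\mathfrak{O}_L\twoheadrightarrow L\nu$) for the converse. Your explicit bookkeeping with $q_i$, $g_i$ and the observation $d_{\theta}\mid g_1$ is just a cleaner rewriting of the paper's inequality chain.
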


\begin{proof}
	$(L/L_2, \nu)$ is unramified implies $\nu L = \nu L_2$. Now $L = L_1 \cdot L_2 \subseteq K^r$, hence $(L/K, \nu)$ is tame. In particular, $(L/K, \nu)$ and any subextension of $L/K$ is defectless. Using the multiplicative property of ramification indices and Lemma \ref{Ostrowski}, we then obtain
	\begin{equation*}
		[\nu L : \nu K] = e_2 = [\nu L : \nu L_1] e_1 = \frac{[L:L_1]}{[L \nu : L_1 \nu]} e_1 .
	\end{equation*}
	 Since $L \nu / L_1 \nu$ is a finite separable extension, so $L \nu = L_1 \nu (\theta \nu)$ for some $\theta \in \mathfrak{O}_L$. Then, 
	\begin{equation*}
	e_2 d_{\theta} = e_1 [L : L_1].
	\end{equation*}
	\newline Conversely suppose $e_2 d_{\theta}= e_1 [L : L_1]$ for some $\theta \in \mathfrak{O}_L$. Now $d_{\theta}  = [L_1 \nu (\theta \nu) : L_1 \nu] \leq [L \nu : L_1 \nu]$. So, 
	\begin{equation*}
	e_1 [L : L_1]  = e_2 d_{\theta} \leq e_2 [L \nu : L_1 \nu] \Longrightarrow e_1 [\nu L : \nu L_1] \leq e_2 \Longrightarrow [\nu L : \nu K] \leq e_2.
	\end{equation*}
	Thus all the above inequalities are equalities. So, $e_2 = [\nu L : \nu K]$ and hence $(L/L_2, \nu)$ is unramified. Also, $d_{\theta} = [L \nu : L_1 \nu]$, that is $L \nu = L_1 \nu (\theta \nu)$. And, $d_{\theta} \text{ divides } [L : L_1] \Longrightarrow e_1 \text{ divides }  e_2$.
\end{proof}

From Lemma \ref{enough to consider tame for unramified} and Lemma \ref{both tame, unramified} we obtain the following theorem. 

\begin{Theorem} \label{theorem for unramified}
	Let $(K, \nu)$ be a henselian field such that $K \nu$ is perfect. Let $(L_1/K, \nu)$ be a finite tame extension and $(L_2/K, \nu)$ be a finite extension. Set $L = L_1 \cdot L_2$. Set $L_2 ^{\prime} = L_2 \sect K^r$ and $L^{\prime} = L_1 \cdot L_2 ^{\prime}$. Set $e_1 = [\nu L_1 : \nu K]$  and $e_2 ^{\prime} = [\nu L_2 ^{\prime} : \nu K]$. For any $\theta \in \mathfrak{O}_{L^{\prime}}$ let $d_{\theta} = [L_1 \nu (\theta \nu) : L_1 \nu]$. Then, 
	\begin{equation}\label{condition for unramified}
	(L/L_2, \nu) \text{ is unramified if and only if } e_2 ^{\prime} d_{\theta} = e_1 [L^{\prime} : L_1] \text{ for some } \theta \in \mathfrak{O}_{L^{\prime}}.
	\end{equation} 
	In this case, $L^{\prime} \nu = L_1 \nu (\theta \nu)$ and $e_1 \text{ divides } e_2 ^{\prime}$.
\end{Theorem}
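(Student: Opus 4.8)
The plan is to deduce the theorem by chaining together the two preceding lemmas, so that essentially no new computation is required. First I would invoke Lemma~\ref{enough to consider tame for unramified}, whose hypotheses (namely $(K,\nu)$ henselian with $K\nu$ perfect, $L_1/K$ finite tame, and $L_2/K$ finite) are exactly those of the present theorem. That lemma tells us that both $(L/L_2,\nu)$ and $(L'/L_2',\nu)$ are finite tame extensions and, crucially, that $(L/L_2,\nu)$ is unramified if and only if $(L'/L_2',\nu)$ is unramified. This reduces the assertion about $L/L_2$ to the corresponding assertion about $L'/L_2'$, which is built entirely out of tame pieces.

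Next I would apply Lemma~\ref{both tame, unramified} to the configuration obtained by taking $K$ as the base field and $L_1$, $L_2'$ as the two tame extensions, so that their compositum is $L' = L_1\cdot L_2'$. The one point that must be checked before this lemma applies is that $L_2'/K$ is tame: since $L_2' = L_2\sect K^r\subseteq K^r$ and $K^r$ is the unique maximal tame extension of $K$, every subextension of $K^r/K$ is tame, so $L_2'/K$ is indeed finite tame; and $L_1/K$ is tame by hypothesis. Note that Lemma~\ref{both tame, unramified} does not require $K\nu$ to be perfect, so there is no extra condition to verify at this stage. With these identifications the relevant ramification indices are precisely $e_1 = [\nu L_1:\nu K]$ and $e_2' = [\nu L_2':\nu K]$, and for $\theta\in\mathfrak{O}_{L'}$ the quantity $d_\theta = [L_1\nu(\theta\nu):L_1\nu]$ is the same as in the lemma. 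Lemma~\ref{both tame, unramified} then yields that $(L'/L_2',\nu)$ is unramified if and only if $e_2'\,d_\theta = e_1[L':L_1]$ for some $\theta\in\mathfrak{O}_{L'}$, and that in this case $L'\nu = L_1\nu(\theta\nu)$ and $e_1$ divides $e_2'$.

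Combining the two steps gives the theorem verbatim: the equivalence furnished by Lemma~\ref{enough to consider tame for unramified} replaces the phrase ``$(L'/L_2',\nu)$ is unramified'' by ``$(L/L_2,\nu)$ is unramified'' in the characterization produced by Lemma~\ref{both tame, unramified}, while the supplementary conclusions $L'\nu = L_1\nu(\theta\nu)$ and $e_1\mid e_2'$ are read off directly. I expect no genuine obstacle at this assembly stage: the mathematical substance lives in the earlier lemmas, particularly the degree bookkeeping of Lemma~\ref{tame wild lemma} that underlies the reduction to the tame case, and the present argument is purely a matter of matching hypotheses and transporting notation. The only places demanding any care are confirming that $L_2'$ is genuinely tame over $K$ and keeping track of the fact that $\theta$ ranges over $\mathfrak{O}_{L'}$ rather than $\mathfrak{O}_L$, both of which are automatic once the correct lemma is applied to the correct configuration.
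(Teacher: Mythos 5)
Your proposal is correct and follows exactly the paper's route: the paper derives the theorem by combining Lemma~\ref{enough to consider tame for unramified} (to replace $(L/L_2,\nu)$ by $(L'/L_2',\nu)$) with Lemma~\ref{both tame, unramified} applied to the tame extensions $L_1/K$ and $L_2'/K$. Your additional checks --- that $L_2'\subseteq K^r$ is tame over $K$ and that $K\nu$ perfect is not needed for the second lemma --- are accurate and are left implicit in the paper.
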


\begin{Corollary}\label{Corollary to unramified theorem when e_1 = e_2}
	Let assumptions and notations be as in Theorem \ref{theorem for unramified}. Suppose that $e_1 = e_2 ^{\prime}$. Then, 
	\begin{equation*}
		(L/L_2, \nu) \text{ is unramified if and only if } (L^{\prime} /L_1, \nu) \text{ is unramified.}
	\end{equation*}
\end{Corollary}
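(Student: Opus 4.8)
The plan is to specialize Theorem \ref{theorem for unramified} to the hypothesis $e_1 = e_2^{\prime}$ and then read off the residue-theoretic content of the resulting numerical equality. First I would record that $(L^{\prime}/L_1, \nu)$ is a finite tame extension: indeed $L_1 \subseteq K^r$ since $(L_1/K, \nu)$ is tame, and $L_2^{\prime} = L_2 \sect K^r \subseteq K^r$, so $L^{\prime} = L_1 \cdot L_2^{\prime} \subseteq K^r = L_1^r$, which exhibits $(L^{\prime}/L_1, \nu)$ as a subextension of the maximal tame extension of $L_1$. In particular $(L^{\prime}/L_1, \nu)$ is defectless, its residue extension $L^{\prime}\nu / L_1\nu$ is separable, and Lemma \ref{Ostrowski} gives
\begin{equation*}
[L^{\prime} : L_1] = [\nu L^{\prime} : \nu L_1]\,[L^{\prime}\nu : L_1\nu].
\end{equation*}
Since $(L^{\prime}/L_1, \nu)$ is tame, it is unramified precisely when $\nu L^{\prime} = \nu L_1$, that is, when $[L^{\prime} : L_1] = [L^{\prime}\nu : L_1\nu]$.

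The crucial observation is the chain of inequalities valid for every $\theta \in \mathfrak{O}_{L^{\prime}}$,
\begin{equation*}
d_{\theta} = [L_1\nu(\theta\nu) : L_1\nu] \leq [L^{\prime}\nu : L_1\nu] \leq [L^{\prime} : L_1],
\end{equation*}
the first inequality holding because $\theta\nu \in L^{\prime}\nu$ and the second following from the displayed Ostrowski identity. Under the hypothesis $e_1 = e_2^{\prime}$, the condition of Theorem \ref{theorem for unramified} reads $e_1 d_\theta = e_1 [L^{\prime}:L_1]$, that is $d_\theta = [L^{\prime}:L_1]$. Hence $(L/L_2, \nu)$ is unramified if and only if there exists $\theta \in \mathfrak{O}_{L^{\prime}}$ saturating the chain above, i.e. with $d_\theta = [L^{\prime}:L_1]$.

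For the forward direction I would argue that $d_\theta = [L^{\prime}:L_1]$ forces both inequalities in the chain to be equalities; in particular $[L^{\prime}\nu : L_1\nu] = [L^{\prime}:L_1]$, whence $[\nu L^{\prime}:\nu L_1] = 1$ and $(L^{\prime}/L_1,\nu)$ is unramified. For the converse, assuming $(L^{\prime}/L_1,\nu)$ unramified gives $[L^{\prime}\nu:L_1\nu] = [L^{\prime}:L_1]$; since $L^{\prime}\nu/L_1\nu$ is finite separable I would choose a primitive element and lift it to some $\theta \in \mathfrak{O}_{L^{\prime}}$ with $L_1\nu(\theta\nu) = L^{\prime}\nu$, so that $d_\theta = [L^{\prime}\nu:L_1\nu] = [L^{\prime}:L_1]$, and Theorem \ref{theorem for unramified} then yields that $(L/L_2,\nu)$ is unramified. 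The only point requiring a little care, and the nearest thing to an obstacle, is this converse lift: I must ensure the primitive element of the separable residue extension $L^{\prime}\nu/L_1\nu$ is realized as $\theta\nu$ for an integral $\theta$, which is immediate since every residue lifts from $\mathfrak{O}_{L^{\prime}}$. Everything else is bookkeeping with the ramification indices and the multiplicativity already established.
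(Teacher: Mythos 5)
Your proof is correct and follows what is evidently the intended route: the paper states this corollary without proof as a direct specialization of Theorem \ref{theorem for unramified}, and your argument supplies exactly the missing details (tameness and defectlessness of $(L^{\prime}/L_1,\nu)$ via $L^{\prime}\subseteq K^r = L_1^r$, the chain $d_\theta \le [L^{\prime}\nu : L_1\nu] \le [L^{\prime}:L_1]$, and the lift of a primitive element of the separable residue extension). No gaps.
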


\begin{Remark} $\theta$ mentioned in Theorem \ref{theorem for unramified} arises naturally in the case of complete discretely valued fields of rank 1, hence in particular, $\mathfrak{P}$-adic fields. Since it is enough to look within tame extensions, let $K, \, L_1, \, L_2$ and $L$ be as in Lemma \ref{both tame, unramified}. Further, suppose that $(K, \nu)$ is complete and discretely valued of rank 1. Then $ \exists \,  \theta \in \mathfrak{O}_L$ such that $\mathfrak{O}_L = \mathfrak{O}_K [\theta]$ and $L \nu = K \nu (\theta \nu)$ [cf. \ref{Iyanaga}, Theorem $4.8^{\prime}$]. In particular, $L \nu = L_1 \nu (\theta \nu)$, and hence, $d_{\theta} = [L \nu : L_1 \nu]$. If $(L/L_2, \nu)$ is unramified, we observe that $\theta$ satisfies (\ref{condition for unramified}). 
\end{Remark}

\begin{Remark}
	Let our assumptions and notations be as in Theorem \ref{theorem for unramified}. If $(L/L_2, \nu)$ is unramified, then $e_1$ divides $e_2$. So, $[\nu L : \nu K] =$ lcm$(e_1, e_2)$. However the converse is not true, as illustrated by the next example. 
\end{Remark}

\begin{Example}
	Let $k$ be a field such that char $k \neq 2,3$. Let the valued field $(K, \nu)$ be as in Example \ref{Counterexample to abhyankar lemma}. Let $L_1 = K(\sqrt{X})$ and $L_2 = K(\sqrt[3]{Y})$. Let $L = L_1 \cdot L_2$. Then $(L/ L_2, \nu)$ is ramified.
\end{Example}

\begin{proof}
	Let $e_1 = [\nu L_1 : \nu K], \, e_2 = [\nu L_2 : \nu K], \, e = [\nu L : \nu K]$. From Example \ref{Counterexample to abhyankar lemma} we have $\nu K = (\ZZ \dirsum \ZZ)_{lex}$ and $k \subseteq K \nu$. So char $K \nu \neq 2,3$. We have an unique extension of $\nu$ to $\overline{K}$ with $\nu \overline{K} = (\QQ \dirsum \QQ)_{lex}$. Further we have, 
	\begin{equation*}
		[L_1 : K] = e_1 = 2, \, \nu L_1 = \frac{1}{2} \ZZ \dirsum \ZZ, \, L_1 \nu = K \nu .
	\end{equation*}
	Again, $3 \nu (\sqrt[3]{Y}) = \nu (Y) = (0,1) \Longrightarrow \nu (\sqrt[3]{Y}) = (0, \frac{1}{3}) \notin \nu K$. Using the same arguments as in the proof of Example \ref{Counterexample to abhyankar lemma}, we have
	\begin{equation*}
		[L_2 : K] = e_2 = 3, \, \nu L_2 =  \ZZ \dirsum \frac{1}{3} \ZZ, \, L_2 \nu = K \nu .
	\end{equation*}
	Since char $K \nu \neq 2,3$, both $(L_1/K, \nu)$ and $(L_2/K, \nu)$ are tame extensions. Again $\frac{1}{2} \ZZ \dirsum \frac{1}{3} \ZZ \subseteq \nu L$. So, $e \geq [\frac{1}{2} \ZZ \dirsum \frac{1}{3} \ZZ : \ZZ \dirsum \ZZ] = 6$. Again, $[L : K] \leq [L_1 : K][L_2:K] = 6$. From Lemma \ref{Ostrowski} we then conclude, 
	\begin{equation*}
		[L:K] = e = 6, \, \nu L = \frac{1}{2} \ZZ \dirsum \frac{1}{3} \ZZ, \, L \nu = K \nu . 
	\end{equation*}
	So in particular, $e = $ lcm$(e_1, e_2)$. However $e \neq e_2$, hence $(L / L_2, \nu)$ is ramified. 
\end{proof}

\par We now give a couple of examples illustrating the above results. In the first example, we choose a particular $\theta$ that satisfies the condition (\ref{condition for unramified}). In the second example, we construct a family of tame extensions of henselian valued fields, with different residue characteristics. For each of those tame extensions, we eliminate the ramification after lifting through a similar finite extension. 

\begin{Example}
	Let $k = \mathbb{F}_3 (t)$ where $t$ is an indeterminate and $\nu = \nu_t$ be the $t$-adic valuation on $k$. Let $K = (k, \nu)^h$ be the henselization of $k$ with respect to $\nu$. Let $L_1 = K(\sqrt{t})$ and $L_2 = K(\sqrt[4]{2t})$. Let $L = L_1 \cdot L_2$. Then $(L/L_2, \nu)$ is unramified.  
\end{Example}

\begin{proof}
	Set $e_1 = [\nu L_1 : \nu K]$ and $e_2 = [\nu L_2 : \nu K]$. We know $\nu k = \ZZ$ and $k \nu = \FF_{3}$. Since henselization is an immediate extension, we have $\nu K = \ZZ$ and $K \nu = \FF_3$. The valuation $\nu$ extends uniquely to $\overline{K}$ with $\nu \overline{K} = \QQ$. Now $2 \nu (\sqrt{t}) =  \nu (t) = 1 \Longrightarrow \nu (\sqrt{ t}) = \frac{1}{2}$. Thus $\sqrt{t} \notin K$. Hence
	\begin{equation*}
	[L_1 : K]  = 2 = e_1, \, \nu L_1 = \frac{1}{2} \ZZ, \, L_1 \nu = \FF_3.
	\end{equation*}
	Similarly, observing $\nu(\sqrt[4]{2t}) = \frac{1}{4}$, we obtain
	\begin{equation*}
	[L_2 : K]  = 4 = e_2, \, \nu L_2 = \frac{1}{4} \ZZ, \, L_2 \nu = \FF_3.
	\end{equation*}
	In particular, $(L_1/K, \nu)$ and $(L_2/K, \nu)$ are both tame extensions. Now $L = K(\sqrt{t}, \sqrt[4]{2t}) = L_1 (\sqrt[4]{2t})$. Thus, 
	\begin{equation*}
	[L : L_1] = [L_1 (\sqrt[4]{2t}) : L_1] \leq 4.
	\end{equation*}
	Again $\nu (\sqrt[4]{2t}) = \frac{1}{4} \Longrightarrow \frac{1}{4} \ZZ \subset \nu L$. Since $\nu L_1 = \frac{1}{2} \ZZ$ we have
	\begin{equation*}
	[\nu L : \nu L_1] \geq 2.
	\end{equation*}
	We also observe that $\frac{(\sqrt[4]{2t})^2}{\sqrt{t}} = \sqrt{2} \in L$. $2 \nu(\sqrt{2}) = \nu(2) = 0 \Longrightarrow \nu (\sqrt{2}) = 0$. Thus $\sqrt{2} \in \mathfrak{O}_L$. 
	\newline Let $f(X) = X^2 - 2 \in L_1 [X]$. Since $L_1 \nu = \FF_3$ and $\nu $ is trivial on $\FF_3$, we have $(f \nu)(X) = X^2 - 2 \in \FF_3 [X]$. Now $f(\sqrt{2}) = 0 \Longrightarrow \nu (f(\sqrt{2})) > 0 \Longrightarrow (f \nu)(\sqrt{2} \nu) = 0$. Since $(f \nu) (X)$ is irreducible on $\FF_3$, it is the minimal polynomial of $\sqrt{2} \nu$ over $\FF_3$. Thus $[L_1 \nu (\sqrt{2} \nu) : L_1 \nu ] = [\FF_3 (\sqrt{2} \nu) : \FF_3] = 2$. So, 
	\begin{equation*}
	[L \nu : L_1 \nu] \geq [L_1 \nu (\sqrt{2} \nu) : L_1 \nu]  = 2.
	\end{equation*}
	Since $(K, \nu)$ is henselian and $(L/L_1, \nu)$ is a tame extension, by Lemma \ref{Ostrowski} we have that $[L : L_1] = [\nu L : \nu L_1] [L \nu : L_1  \nu]$. So from the preceding discussion, it follows
	\begin{equation*}
	[L : L_1] = 4, \, [\nu L : \nu L_1] = 2, \, [L \nu : L_1 \nu] = 2.
	\end{equation*}
	Denoting $d_{\sqrt{2}} = [L_1 \nu (\sqrt{2} \nu) : L_1 \nu]$, we then have $e_1 [L : L_1] =  e_2 d_{\sqrt{2}}$. Hence by Lemma \ref{both tame, unramified} we conclude that $(L/L_2, \nu)$ is unramified. Also we obtain $L \nu = \FF_3 (\sqrt{2} \nu)$.
	
\end{proof}

\begin{Example}
Let $p$ be an odd prime such that $p \equiv 3 (\text{mod } 4)$. Let $k = \mathbb{F}_p (t)$ where $t$ is an indeterminate and $\nu = \nu_t$ be the $t$-adic valuation on $k$. Let $K = (k, \nu)^h$ be the henselization of $k$ with respect to $\nu$. Let $L_1 = K(\sqrt{t})$. Then there exists a finite extension $(L_2/K, \nu)$ such that $(L_1 \cdot L_2 / L_2, \nu)$ is uniformizable.   
\end{Example}

\begin{proof}
	We have $\nu K = \ZZ$ and $K \nu = \FF_p$. Denoting $e_1 = [\nu L_1: \nu K]$, from the earlier example we have that $\nu (\sqrt{t}) = \frac{1}{2}$. Further, 
	\begin{equation*}
		[L_1 : K] = e_1 = 2, \, \nu L_1 = \frac{1}{2} \ZZ , \, L_1 \nu = \FF_p.
	\end{equation*}
	Let $\theta = \sqrt{-1} \in \overline{\FF_p}$. Then $2 \nu (\theta) = \nu (-1) = 0 \Longrightarrow \nu (\theta) = 0$. Let $L_2 = K(\theta \sqrt{t}) = K(\sqrt{-t})$. Thus $[L_2 : K ] \leq 2$. Again as before $\nu (\sqrt{-t}) = \frac{1}{2}$. Denoting $e_2 = [\nu L_2 : \nu K]$, we then have
	\begin{equation*}
		[L_2 : K] = e_2 = 2, \, \nu L_2 = \frac{1}{2} \ZZ , \, L_2 \nu = \FF_p.
	\end{equation*} 
	Let $L = L_1 \cdot L_2 = L_1 (\theta)$. Then $[L : L_1] \leq 2$. Now let $f(X) = X^2 + 1 \in L_1 [X]$. Since $\nu$ is trivial on $\FF_p$ we thus have $(f \nu)(X) = X^2 + 1 \in \FF_p [X]$. Then $f(\theta) = 0 \Longrightarrow (f \nu)(\theta \nu) = 0$. Again, $p \equiv 3 (\text{mod}4)$ implies $(f \nu)(X) \in \FF_p [X]$ is irreducible. Thus, $d_{\theta} = [\FF_p (\theta \nu) : \FF_p] = 2$. So $[L \nu : L_1 \nu] \geq 2$. Thus, 
	\begin{equation*}
		[L : L_1] = [L \nu : L_1 \nu], \, \nu L = \nu L_1.
	\end{equation*}
	Since $(L_1/K, \nu)$ and $(L_2/K, \nu)$ are tame extensions, we have that $(L/L_1, \nu)$ is tame. Hence $(L/L_1, \nu)$ is unramified. Again $e_1 = e_2$. From Corollary \ref{Corollary to unramified theorem when e_1 = e_2} it follows that $(L/L_2, \nu)$ is unramified. In particular $L \subseteq L_2 ^i$. 
	\newline Now $(K, \nu)$ is henselian and $(L_2/K, \nu)$ is algebraic implies that $(L_2, \nu)$ is henselian. Also $(L/L_2, \nu)$ is a finite extension with $L \subseteq L_2 ^i$. Then from [\ref{Kuh vln model}, Proposition 15.8], it follows that $(L/L_2, \nu)$ is uniformizable. 
\end{proof}

\end{document}